\numberwithin{theorem}{section}
\numberwithin{equation}{section}
\newcommand{\CC}{{\mathbb C}}
\newcommand{\DD}{{\mathbb D}}
\newcommand{\TT}{{\mathbb T}}
\newcommand{\cD}{{\mathcal D}}
\DeclareMathOperator{\hol}{\mathrm Hol}
\journalname{Journal}
\begin{document}

\title{Polynomial approximation in weighted Dirichlet spaces
\thanks{JM supported by an NSERC grant. TR supported by grants from NSERC and the Canada Research Chairs program.}}

\titlerunning{Polynomial approximation}        

\author{Javad Mashreghi        \and
        Thomas Ransford}

\institute{J. Mashreghi\at
              D\'epartement de math\'ematiques et de statistique, Universit\'e Laval,
Qu\'ebec City (Qu\'ebec), Canada G1V 0A6\\
              \email{javad.mashreghi@mat.ulaval.ca}
                  \and
           T. Ransford \at
              D\'epartement de math\'ematiques et de statistique, Universit\'e Laval,
Qu\'ebec City (Qu\'ebec),  Canada G1V 0A6\\
 \email{thomas.ransford@mat.ulaval.ca}  
}

\date{Received: date / Accepted: date}

\maketitle

\begin{abstract}
We give an elementary proof of an analogue of Fej\'er's theorem
in weighted Dirichlet spaces with superharmonic weights. This provides a simple way of seeing that
polynomials are dense in such spaces.
\keywords{Dirichlet space \and superharmonic weight \and Fej\'er theorem}
\subclass{41A10 \and 30E10 \and 30H99}
\end{abstract}

\section{Introduction and statement of main result}\label{S:intro}

Let  $\DD$ be the open unit disk and $\TT$ be the unit circle.
We denote $\hol(\DD)$ the set of all holomorphic functions on $\DD$,
and  by $H^2$ the Hardy space on $\DD$.

Given $\zeta\in\overline{\DD}$, we define $\cD_\zeta$ to be the set
 of all $f\in\hol(\DD)$ of the form 
\[
f(z)=a+(z-\zeta)g(z),
\]
where $g\in H^2$ and $a\in\CC$. In this case, we set $\cD_\zeta(f):=\|g\|_{H^2}^2$. We adopt the convention that,
if $f\in\hol(\DD)$ but $f\notin\cD_\zeta$, then $\cD_\zeta(f):=\infty$.

Given a positive finite Borel measure $\mu$ on $\overline{\DD}$, we define $\cD_\mu$ to be the set of all
$f\in\hol(\DD)$ such that
\[
\cD_\mu(f):=\int_{\overline{\DD}}\cD_\zeta(f)\,d\mu(\zeta)<\infty.
\]
We endow $\cD_\mu$ with the norm $\|\cdot\|_{\cD_\mu}$ defined by
\[
\|f\|_{\cD_\mu}^2:=|f(0)|^2+\cD_\mu(f).
\]

The spaces $\cD_\mu$  are in fact precisely the weighted Dirichlet spaces on $\DD$ with superharmonic weights. 
Though this identification is not required for a technical understanding of the results in this note,
 it serves as background and motivation for these results.
 The final section \S\ref{S:background} contains a brief account of these spaces and their history. 
When $\mu=\delta_\zeta$, the Dirac measure at $\zeta\in\overline{\DD}$, the space $\cD_\mu$ reduces to $\cD_\zeta$, which is sometimes called the local Dirichlet space at $\zeta$. 

A fundamental property of the spaces $\cD_\mu$ is that polynomials are dense.
This fact was originally established by Richter \cite{Ri91}
and  Aleman \cite{Al93}
using a  type of wandering-subspace theorem. Other proofs followed, most recently
a direct proof via a Fej\'er-type approximation theorem \cite{MR19}.
Our goal in this note is to give a more elementary proof of this last theorem.
In fact we shall establish the following generalization.

\begin{theorem}\label{T:main}
Let $(w_{n,k})_{n,k\ge0}$ be an array of complex numbers such that:
\begin{align}
w_{n,k}&=0 \quad(k>n),\label{E:main0}\\
\lim_{n\to\infty} w_{n,k}&=1 \quad(k\ge0), \label{E:main1}\\
|w_{n,k}|&\le M \quad(n,k\ge0), \label{E:main2}\\
|w_{n,k}-w_{n,k+1}|&\le L/n \quad(n,k\ge0), \label{E:main3}
\end{align}
where $L,M$ are constants.
Given $f\in\hol(\DD)$, say  $f(z)=\sum_{k=0}^\infty a_kz^k$, set
\begin{equation}\label{E:pn}
p_n(z):=\sum_{k=0}^n w_{n,k}a_kz^k.
\end{equation}
Then, for each positive finite measure $\mu$ on $\overline{\DD}$ such that $f\in\cD_\mu$, we have
\[
\|f-p_n\|_{\cD_\mu}\to0 \quad(n\to\infty).
\]
\end{theorem}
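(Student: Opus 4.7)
The scalar term $|f(0)-p_n(0)|^2 = |1-w_{n,0}|^2|a_0|^2$ tends to $0$ by \eqref{E:main1}, so the task reduces to showing that $\int_{\overline{\DD}}\cD_\zeta(f-p_n)\,d\mu(\zeta) \to 0$. I would prove this by dominated convergence, which requires two ingredients: (i) $\cD_\zeta(f-p_n) \to 0$ for every $\zeta \in \overline{\DD}$ with $f \in \cD_\zeta$; and (ii) a uniform bound $\cD_\zeta(f-p_n) \le C\cD_\zeta(f)$ for all $n$ and $\zeta \in \overline{\DD}$, with $C$ depending only on $L$ and $M$. Since $f \in \cD_\mu$, ingredient (ii) supplies the integrable dominator $\zeta\mapsto C\cD_\zeta(f)$.

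\textbf{Coefficient formula.} For $h(z) = \sum_k \hat h(k)z^k \in \hol(\DD)$, the identity $z^k - \zeta^k = (z-\zeta)\sum_{j<k}z^j\zeta^{k-1-j}$ together with the definition of $\cD_\zeta$ yields the explicit expression
\[
\cD_\zeta(h) = \sum_{j\ge 0}\Bigl|\sum_{k>j}\hat h(k)\zeta^{k-1-j}\Bigr|^2.
\]
Setting $b_j(\zeta) := \sum_{k>j}a_k\zeta^{k-1-j}$, we have $\cD_\zeta(f) = \sum_j|b_j(\zeta)|^2$ and $\cD_\zeta(f-p_n) = \sum_j|B_j^{(n)}(\zeta)|^2$, where $B_j^{(n)}(\zeta) := \sum_{k>j}(1-w_{n,k})a_k\zeta^{k-1-j}$ (with the convention $w_{n,k}=0$ for $k>n$).

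\textbf{Abel summation.} The decisive step is summation by parts in $k$, using the relation $\sum_{k>l}a_k\zeta^{k-1-j} = \zeta^{l-j}b_l(\zeta)$ valid for $l \ge j$. This transforms $B_j^{(n)}$ into
\[
B_j^{(n)}(\zeta) = (1-w_{n,j+1})b_j(\zeta) + \sum_{k=j+1}^n(w_{n,k}-w_{n,k+1})\zeta^{k-j}b_k(\zeta),
\]
which isolates the smallness in \eqref{E:main3}: each second-difference is bounded by $L/n$, and the boundary term at $k=\infty$ vanishes because $b_k(\zeta)\to 0$.

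\textbf{Estimates and obstacle.} From $|x+y|^2 \le 2|x|^2+2|y|^2$, the first contribution to $\sum_j|B_j^{(n)}(\zeta)|^2$ is $2\sum_j|1-w_{n,j+1}|^2|b_j(\zeta)|^2$, which is bounded by $2(M+1)^2\cD_\zeta(f)$ via \eqref{E:main2} and tends to $0$ by dominated convergence for series using \eqref{E:main1}. For the second contribution, Cauchy--Schwarz in $k$ gives
\[
\sum_j\Bigl|\sum_{k=j+1}^n(w_{n,k}-w_{n,k+1})\zeta^{k-j}b_k(\zeta)\Bigr|^2 \le \frac{L^2}{n^2}\sum_{j=0}^{n-1}(n-j)\sum_{k=j+1}^n|\zeta|^{2(k-j)}|b_k(\zeta)|^2;
\]
interchanging the order of summation and using $|\zeta|\le 1$ reduces the $j$-sum to a factor $\le k/n \le 1$ in front of $|b_k(\zeta)|^2$, which gives the uniform bound $L^2\cD_\zeta(f)$ and, via a split at a large index $N$, also convergence to $0$. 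The main obstacle is quantitative: the $O(1/n)$ gain from \eqref{E:main3} must exactly offset the $O(n)$ length of the Abel sum, and it is precisely the $k/n \le 1$ cancellation produced after the order-swap that prevents a spurious weighted norm such as $\sum_k k|b_k(\zeta)|^2$ from appearing and keeps the bound pinned to $\cD_\zeta(f)$ itself.
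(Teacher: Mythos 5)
Your argument is correct and takes essentially the same route as the paper: the dominated-convergence framework over $\zeta$, the splitting of $1-w_{n,k}$ into $(1-w_{n,k+1})$ plus the telescoped differences $w_{n,k}-w_{n,k+1}$ (your Abel-summation identity for $B_j^{(n)}$ is exactly the paper's formula \eqref{E:fn2} read off at the level of the local Dirichlet coefficients $b_j(\zeta)$), and the Cauchy--Schwarz step trading the length-$n$ inner sum against the $L/n$ bound from \eqref{E:main3}, which is the paper's Lemma~\ref{L:poly} inlined (with the marginally sharper factor $k/n$ in place of $1$). The only cosmetic difference is that you work directly with the coefficient formula for $\cD_\zeta$ rather than introducing the auxiliary polynomials $f_n=a+(z-\zeta)g_n$; there is no gap.
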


As a consequence, we deduce the Fej\'er-type theorem mentioned above.
Given $f(z)=\sum_{k=0}^\infty a_kz^k$, we write
\[
s_n(f)(z):=\sum_{k=0}^n a_kz^k
\quad\text{and}\quad 
\sigma_n(f)(z):=\sum_{k=0}^n \Bigl(1-\frac{k}{n+1}\Bigr)a_kz^k.
\]

\begin{corollary}
If $\mu$ is a finite measure on $\overline{\DD}$ and if $f\in\cD_\mu$, then 
\[
\|\sigma_n(f)-f\|_{\cD_\mu}\to0 \quad(n\to\infty).
\]
\end{corollary}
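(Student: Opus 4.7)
The plan is to recognize the corollary as a direct application of Theorem~\ref{T:main} with the specific Fej\'er weights
\[
w_{n,k}:=
\begin{cases}
1-\dfrac{k}{n+1}, & 0\le k\le n,\\[2pt]
0, & k>n,
\end{cases}
\]
for then $p_n$ in \eqref{E:pn} becomes exactly $\sigma_n(f)$, and the conclusion is immediate.

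So the whole task reduces to checking that the four hypotheses \eqref{E:main0}--\eqref{E:main3} hold for this array. Conditions \eqref{E:main0} and \eqref{E:main1} are immediate from the definition: the weights vanish for $k>n$, and for each fixed $k$ we have $1-k/(n+1)\to 1$ as $n\to\infty$. Condition \eqref{E:main2} holds with $M=1$, since $0\le w_{n,k}\le 1$ throughout.

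The only mildly delicate point is \eqref{E:main3}, because the differences $|w_{n,k}-w_{n,k+1}|$ behave differently in three ranges. For $0\le k<n$ both weights are nonzero and the difference equals $1/(n+1)$; at $k=n$ the difference is $|w_{n,n}-0|=1/(n+1)$; for $k>n$ it is zero. Hence in all cases the difference is bounded by $1/(n+1)\le 1/n$ for $n\ge 1$, and we may take $L=1$. (The hypothesis is only needed for $n$ large, so the degenerate case $n=0$ is harmless.)

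There is no real obstacle in this argument — the main theorem does all the work — so the proof is literally the verification of these bounds followed by invocation of Theorem~\ref{T:main} to conclude that $\|f-\sigma_n(f)\|_{\cD_\mu}\to 0$.
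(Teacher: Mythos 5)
Your proof is correct and is exactly the paper's approach: the paper's own proof is the one-line ``apply Theorem~\ref{T:main} with $w_{n,k}:=1-k/(n+1)$ for $k\le n$ and zero otherwise,'' and you have simply spelled out the verification of \eqref{E:main0}--\eqref{E:main3} (with $M=L=1$) that the paper leaves implicit. Nothing to add.
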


\begin{proof}
Apply Theorem~\ref{T:main} with $w_{n,k}:=1-k/(n+1)$ if $k\le n$ and zero otherwise.\qed
\end{proof}

By contrast, it is known that, if $\mu=\delta_1$,  the Dirac measure at $\zeta=1$, then there exists $f\in\cD_\mu$ such that 
$\|s_n(f)-f\|_{\cD_\mu}\not\to0$ as $n\to\infty$ (see for example \cite[p.117, Exercise~7.3.2]{EKMR14}). 
This shows that Theorem~\ref{T:main} is no longer true if we omit 
the condition~\eqref{E:main3}.


\section{Approximation in local Dirichlet spaces}\label{S:localapprox}

We begin with a simple lemma about approximation in $H^2$.

\begin{lemma}\label{L:idea1}
Let $(w_{n,k})_{n,k\ge0}$ be an array of complex numbers 
satisfying the conditions \eqref{E:main0}, \eqref{E:main1} and \eqref{E:main2}.
Let $g\in H^2$, say $g(z)=\sum_{k=0}^\infty b_kz^k$, and 
for $n\ge0$ let
\begin{equation}\label{E:gn}
g_n(z):=\sum_{k=0}^{n-1} w_{n,k+1}b_kz^k.
\end{equation}
Then
\begin{equation}\label{E:idea1a}
\|g-g_n\|_{H^2}\to0 \quad(n\to\infty).
\end{equation}
Moreover, there exists a constant $C$, depending only on the array $(w_{n,k})$, such that
\begin{equation}\label{E:idea1b}
\|g_n\|_{H^2}\le C\|g\|_{H^2}\quad(n\ge0).
\end{equation}
\end{lemma}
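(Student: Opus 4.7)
The plan is to use the orthonormality of the monomials $\{z^k\}$ in $H^2$, which reduces the whole question to a weighted-$\ell^2$ statement about the coefficients. Writing $g(z)=\sum_{k\ge0}b_kz^k$ with $\sum|b_k|^2<\infty$, Parseval gives
\[
\|g-g_n\|_{H^2}^2 = \sum_{k=0}^{n-1}|1-w_{n,k+1}|^2|b_k|^2+\sum_{k=n}^{\infty}|b_k|^2,
\]
and similarly $\|g_n\|_{H^2}^2=\sum_{k=0}^{n-1}|w_{n,k+1}|^2|b_k|^2$. So both parts of the lemma will fall out by elementary estimates on these two series; no analytic machinery beyond Parseval is needed, and in particular condition \eqref{E:main3} plays no role here.

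For \eqref{E:idea1b}, I would just apply the uniform bound \eqref{E:main2}: each $|w_{n,k+1}|^2\le M^2$, hence $\|g_n\|_{H^2}^2\le M^2\sum_{k=0}^{n-1}|b_k|^2\le M^2\|g\|_{H^2}^2$, so $C=M$ works and depends only on the array.

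For \eqref{E:idea1a}, I would run a standard two-piece splitting argument. Fix $\varepsilon>0$ and, using $\sum|b_k|^2<\infty$, choose $N$ so that $\sum_{k\ge N}|b_k|^2<\varepsilon$. Split
\[
\sum_{k=0}^{n-1}|1-w_{n,k+1}|^2|b_k|^2=\sum_{k=0}^{N-1}+\sum_{k=N}^{n-1}
\]
(for $n>N$). On the tail, bound $|1-w_{n,k+1}|\le 1+M$ by \eqref{E:main2}, giving a contribution at most $(1+M)^2\varepsilon$. On the head, the sum is finite with $N$ fixed, and \eqref{E:main1} says $w_{n,k+1}\to1$ as $n\to\infty$ for each $k$, so the head tends to $0$. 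Combining with the trivial bound $\sum_{k\ge n}|b_k|^2\to0$ yields $\limsup_n\|g-g_n\|_{H^2}^2\le(1+M)^2\varepsilon$, and letting $\varepsilon\downarrow0$ finishes the proof.

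There is no real obstacle; the argument is essentially the classical proof that summability methods satisfying a Toeplitz-type condition converge on $\ell^2$. The only thing to double-check is the indexing shift between $w_{n,k+1}$ in \eqref{E:gn} and $w_{n,k}$ in the hypotheses, but conditions \eqref{E:main1} and \eqref{E:main2} are uniform in $k$, so the shift causes no difficulty.
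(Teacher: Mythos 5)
Your proposal is correct and is essentially the paper's own argument: Parseval reduces everything to coefficient estimates, \eqref{E:main2} gives \eqref{E:idea1b} with $C=M$, and the same head/tail splitting with \eqref{E:main1} on the head and the $(1+M)$ bound on the tail gives \eqref{E:idea1a}. No differences worth noting.
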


\begin{proof}
From condition~\eqref{E:main2} we have 
\[
\|g_n\|_{H^2}^2=\sum_{k=0}^\infty |w_{n,k+1}|^2|b_k|^2\le M^2\sum_{k=0}^\infty|b_k|^2=M^2\|g\|_{H^2}^2.
\]
This gives \eqref{E:idea1b} with $C=M$. 

Also, given $\epsilon>0$, we can choose $N$ large enough so that $\sum_{k=N}^\infty|b_k|^2<\epsilon^2$,
and then, for $n\ge N$, we have
\[
\|g-g_n\|_{H^2}^2\le \sum_{k=0}^{N-1}|1-w_{n,k+1}|^2|b_k|^2+(1+M)^2\epsilon^2.
\]
From \eqref{E:main1}, the first term on the right-hand side tends to zero as $n\to\infty$. 
Thus we have $\limsup_{n\to\infty}\|g-g_n\|_{H^2}\le (1+M)\epsilon$.
As $\epsilon$ is arbitrary, this gives \eqref{E:idea1a}.\qed
\end{proof}

In light of the definition of $\cD_\zeta$, 
this lemma translates into the following approximation result for local Dirichlet spaces.

\begin{theorem}\label{T:localapprox}
Let $(w_{n,k})_{n,k\ge0}$ be an array of complex numbers 
satisfying conditions \eqref{E:main0}, \eqref{E:main1} and \eqref{E:main2}.
Let $\zeta\in\overline{\DD}$ and let $f\in\cD_\zeta$,
say $f(z)=a+(z-\zeta)g(z)$, where $g\in H^2$ and $a\in\CC$.
Define $g_n$ as in \eqref{E:gn} and set
\begin{equation}\label{E:fn}
f_n(z):=a+(z-\zeta)g_n(z).
\end{equation}
Then 
\[
\cD_\zeta(f-f_n)\to 0 \quad(n\to\infty),
\]
and there exists a constant $C$, depending only on the array $(w_{n,k})$, such that
\[
\cD_\zeta(f_n)\le C^2\cD_\zeta(f) \quad(n\ge0).
\]
\end{theorem}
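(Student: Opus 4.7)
The plan is that Theorem~\ref{T:localapprox} reduces, almost by inspection, to Lemma~\ref{L:idea1} once one unravels the definitions. The core observation is the pair of factorizations
\[
f_n(z) = a + (z-\zeta)g_n(z) \quad\text{and}\quad f(z) - f_n(z) = 0 + (z-\zeta)\bigl(g(z)-g_n(z)\bigr),
\]
in which the auxiliary functions $g_n$ and $g-g_n$ both lie in $H^2$: the former because it is a polynomial, the latter since $H^2$ is a vector space containing $g$ and $g_n$.

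First I would invoke the definition of $\cD_\zeta$ applied to these two factorizations. This yields $\cD_\zeta(f_n) = \|g_n\|_{H^2}^2$ and $\cD_\zeta(f - f_n) = \|g - g_n\|_{H^2}^2$, while on the original function we have $\cD_\zeta(f) = \|g\|_{H^2}^2$. The convergence $\cD_\zeta(f-f_n) \to 0$ is then immediate from \eqref{E:idea1a}, and the norm bound $\cD_\zeta(f_n) \le C^2 \cD_\zeta(f)$ follows directly from \eqref{E:idea1b}.

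The only point needing any care is that the decomposition $f = a + (z-\zeta)g$ with $g \in H^2$ is unique, so that the value $\cD_\zeta(\,\cdot\,)$ can be unambiguously read off from a given factorization. For $\zeta \in \DD$ this is immediate, since $a = f(\zeta)$ determines $g$. For $\zeta \in \TT$, comparing two decompositions yields an identity $c = (z-\zeta)h(z)$ with $h \in H^2$ and $c \in \CC$; matching Taylor coefficients forces $|h_k|$ to be independent of $k$, so $h \equiv 0$ and $c = 0$. I do not expect any genuine obstacle here: the theorem is essentially a two-line corollary of Lemma~\ref{L:idea1}, with all of the analytic work already hidden in that lemma.
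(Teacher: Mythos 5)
Your proposal is correct and is essentially the paper's own proof: the authors likewise observe that $\cD_\zeta(f_n)=\|g_n\|_{H^2}^2$ and $\cD_\zeta(f-f_n)=\|g-g_n\|_{H^2}^2$ and then appeal directly to Lemma~\ref{L:idea1}. Your extra check that the decomposition $f=a+(z-\zeta)g$ is unique (so that $\cD_\zeta$ is well defined on a given factorization) is a sound and welcome addition, but it does not change the route.
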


\begin{proof}
This is an immediate consequence of the identities 
 $\cD_\zeta(f_n)=\|g_n\|_{H^2}^2$ and $\cD_\zeta(f-f_n)=\|g-g_n\|_{H^2}^2$.\qed
\end{proof}

Let us compute the polynomials $f_n$ explicitly. If we write $f(z)=\sum_{k=0}^\infty a_kz^k$
and $g(z)=\sum_{k=0}^\infty b_kz^k$ and equate coefficients of $z^k$
in the relation $f(z)=a+(z-\zeta)g(z)$, then we obtain
\begin{equation}\label{E:ab}
\left\{
\begin{aligned}
a_0&=a-\zeta b_0,\\
a_k&=b_{k-1}-\zeta b_k \quad(k\ge1).
\end{aligned}
\right.
\end{equation}
Hence
\begin{align*}
f_n(z)
&=a+(z-\zeta)g_n(z)\\
&=a+(z-\zeta)\sum_{k=0}^{n-1}w_{n,k+1}b_k z^k\\
&=a+\sum_{k=1}^n w_{n,k}b_{k-1}z^k-\zeta\sum_{k=0}^{n}w_{n,k+1}b_kz^k\\
&=a+\sum_{k=1}^n w_{n,k}(b_{k-1}-\zeta b_k)z^k+\zeta\sum_{k=0}^{n}(w_{n,k}-w_{n,k+1})b_kz^k-\zeta w_{n,0}b_0\\
&=a+\sum_{k=1}^nw_{n,k}a_kz^k+\zeta\sum_{k=0}^{n}(w_{n,k}-w_{n,k+1})b_kz^k-\zeta w_{n,0}b_0,
\end{align*}
the last line using \eqref{E:ab}.
Rearranging this slightly, we get
\begin{equation}\label{E:fn2}
f_n(z)=\sum_{k=0}^nw_{n,k}a_kz^k+\zeta\sum_{k=0}^{n}(w_{n,k}-w_{n,k+1})b_kz^k+(1-w_{n,0})a.
\end{equation}

The following special case is worthy of note.

\begin{corollary}\label{C:localapprox}
Let $\zeta\in\overline{\DD}$ and let $f\in\cD_\zeta$, say
$f(z)=\sum_{k=0}^\infty a_kz^k$. Then $\sum_{k=0}^\infty a_k\zeta^k$ converges and, setting
\[
f_n(z):=\sum_{k=0}^{n-1} a_kz^k +\Bigl(\sum_{k=n}^\infty a_k\zeta^{k-n}\Bigr)z^n,
\]
we have
\[
\cD_\zeta(f-f_n)\to0.
\]
\end{corollary}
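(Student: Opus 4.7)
The plan is to apply Theorem~\ref{T:localapprox} with the specific array
\[
w_{n,k}:=\begin{cases}1, & 0\le k\le n,\\ 0, & k>n.\end{cases}
\]
This array manifestly satisfies \eqref{E:main0}, \eqref{E:main1} and \eqref{E:main2} with $M=1$; the Lipschitz-type condition \eqref{E:main3} plays no role, which is why the corollary is valid at every $\zeta\in\overline{\DD}$.

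Write $f(z)=a+(z-\zeta)g(z)$ with $g(z)=\sum_{k=0}^\infty b_kz^k\in H^2$. For the array above, the function $g_n$ of \eqref{E:gn} is the plain partial sum $g_n(z)=\sum_{k=0}^{n-1}b_kz^k$, and so $f_n(z)=a+(z-\zeta)g_n(z)$. A short telescoping computation based on \eqref{E:ab} (or, equivalently, specialising \eqref{E:fn2}) gives
\[
f_n(z)=\sum_{k=0}^{n-1}a_kz^k+b_{n-1}z^n.
\]
Thus the conclusion $\cD_\zeta(f-f_n)\to 0$ supplied by Theorem~\ref{T:localapprox} already has the desired shape, provided one can identify $b_{n-1}$ with the sum $\sum_{k=n}^\infty a_k\zeta^{k-n}$.

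This identification — together with the asserted convergence of $\sum_{k=0}^\infty a_k\zeta^k$ — is obtained by a second telescoping. Using \eqref{E:ab}, for every $N\ge n$,
\[
\sum_{k=n}^N a_k\zeta^{k-n}=\sum_{k=n}^N(b_{k-1}-\zeta b_k)\zeta^{k-n}=b_{n-1}-b_N\zeta^{N-n+1},
\]
with the convention $b_{-1}:=a$ covering the case $n=0$. Since $g\in H^2$ implies $\sum|b_k|^2<\infty$, and in particular $b_N\to 0$, while $|\zeta|\le 1$, the error term $b_N\zeta^{N-n+1}$ tends to zero as $N\to\infty$. This yields both the convergence of $\sum_{k=0}^\infty a_k\zeta^k$ and the identity $\sum_{k=n}^\infty a_k\zeta^{k-n}=b_{n-1}$, completing the proof. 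The only genuinely delicate point is the numerical convergence of these series at a point of $\overline{\DD}$, and it is handled cleanly by the decay of the $H^2$ Taylor coefficients.
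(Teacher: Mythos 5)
Your proof is correct and takes essentially the same route as the paper: apply Theorem~\ref{T:localapprox} with the truncation array $w_{n,k}=1$ for $k\le n$ and $0$ otherwise, use \eqref{E:fn2} and \eqref{E:ab} to rewrite $f_n$ as $\sum_{k=0}^{n-1}a_kz^k+b_{n-1}z^n$, and identify $b_{n-1}$ by telescoping together with $b_N\to0$. Your convention $b_{-1}:=a$ is a harmless (indeed slightly cleaner) way of covering the convergence of $\sum_{k\ge0}a_k\zeta^k$ itself.
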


\begin{proof}
Let $w_{n,k}:=1$ for $k\le n$ and zero otherwise.
This satisfies \eqref{E:main0}, \eqref{E:main1} and \eqref{E:main2}, so
by Corollary~\ref{C:localapprox} we have $\cD_\zeta(f-f_n)\to0$ as $n\to\infty$. Formula \eqref{E:fn2} shows that
\[
f_n(z)=\sum_{k=0}^n a_kz^k+\zeta b_nz^n=\sum_{k=0}^{n-1}a_kz^k +b_{n-1}z^n,
\]
where the last equality is from \eqref{E:ab}. 

All that remains is to identify $b_{n-1}$. 
For this, we note that, by \eqref{E:ab}, for all $N\ge n$ we have
\[
\sum_{k=n}^N a_k\zeta^{k-n}
=\zeta^{1-n}\sum_{k=n}^N (b_{k-1}\zeta^{k-1}-b_k\zeta^k)
=b_{n-1}-b_N\zeta^{N+1-n}.
\]
Since the sequence $(b_k)$ is square summable, 
we also have $b_N\to0$ as $N\to\infty$. Hence $b_{n-1}=\sum_{k=n}^\infty a_k\zeta^{k-n}$, as desired.\qed
\end{proof}


\section{Proof of Theorem~\ref{T:main}}\label{S:proof}

Unfortunately, the polynomials $f_n$ that approximate $f$ 
in Theorem~\ref{T:localapprox} and Corollary~\ref{C:localapprox} depend upon
$\zeta$ (note that the $(b_k)$ also depend upon $\zeta$). 
This makes them unsuitable for approximation in $\cD_\mu$. To circumvent this difficulty, we return to the formula \eqref{E:fn2}. Notice that, although $f_n$ itself depends on $\zeta$,
the first term on the right-hand side of \eqref{E:fn2} does not. If we can somehow show that the other terms tend to zero  in an appropriate way, then the first term approximates $f_n$ and hence also~$f$.
This is the strategy for the proof of Theorem~\ref{T:main}. To implement it, we need the following general estimate.

\begin{lemma}\label{L:poly}
Let $q$ be  a polynomial of degree $n$. Then
\[
\cD_\zeta(q)\le n^2\|q\|_{H^2}^2 \quad(\zeta\in\overline{\DD}).
\]
\end{lemma}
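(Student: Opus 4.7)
The idea is to write the function $g$ appearing in the decomposition of $q$ explicitly as a polynomial and estimate its $H^2$-norm directly.

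Write $q(z)=\sum_{k=0}^n c_kz^k$. Since $q$ is a polynomial, $q\in\cD_\zeta$ with the decomposition $q(z)=q(\zeta)+(z-\zeta)g(z)$, where
\[
g(z)=\frac{q(z)-q(\zeta)}{z-\zeta}
\]
is a polynomial of degree at most $n-1$. By definition $\cD_\zeta(q)=\|g\|_{H^2}^2$, so the task is simply to show $\|g\|_{H^2}\le n\|q\|_{H^2}$.

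The key step is to expand $g$ using the elementary factorization
\[
\frac{z^k-\zeta^k}{z-\zeta}=\sum_{j=0}^{k-1}z^j\zeta^{k-1-j},
\]
which yields
\[
g(z)=\sum_{k=1}^n c_k\sum_{j=0}^{k-1}z^j\zeta^{k-1-j}.
\]
Since the monomials $z^j$ are orthonormal in $H^2$ and $|\zeta|\le 1$, the inner sum has $H^2$-norm at most $\sqrt{k}$. Applying the triangle inequality in $H^2$ followed by Cauchy--Schwarz gives
\[
\|g\|_{H^2}\le\sum_{k=1}^n|c_k|\sqrt{k}\le\Bigl(\sum_{k=1}^n k\Bigr)^{1/2}\Bigl(\sum_{k=1}^n|c_k|^2\Bigr)^{1/2}\le n\,\|q\|_{H^2},
\]
using $\sum_{k=1}^n k=n(n+1)/2\le n^2$. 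Squaring yields the desired estimate $\cD_\zeta(q)\le n^2\|q\|_{H^2}^2$, uniformly in $\zeta\in\overline{\DD}$.

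There is essentially no obstacle here: the argument is a one-line computation once one observes that $g$ is a polynomial and uses $|\zeta|\le 1$ to control the $\zeta$-powers. The only mild subtlety is that the bound must be uniform in $\zeta$, but this is automatic because the estimate on $\|\sum_{j=0}^{k-1}z^j\zeta^{k-1-j}\|_{H^2}$ holds for every $\zeta\in\overline{\DD}$.
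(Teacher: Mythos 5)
Your proof is correct and follows essentially the same route as the paper: both compute $g=(q-q(\zeta))/(z-\zeta)$ explicitly via the factorization of $z^k-\zeta^k$ and then estimate using $|\zeta|\le1$ and Cauchy--Schwarz. The only difference is bookkeeping---you apply the triangle inequality in $H^2$ first and then Cauchy--Schwarz to the resulting scalar sum, whereas the paper bounds each Taylor coefficient of $g$ separately and sums the squares; your version even yields the marginally sharper constant $n(n+1)/2$ in place of $n^2$.
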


\begin{proof}
We have $\cD_\zeta(q)=\sum_{k=0}^{n-1}|d_k|^2$, where the $(d_k)$ are determined by the relation
\[
q(z)=q(\zeta)+(z-\zeta)\sum_{k=0}^{n-1}d_kz^k.
\]
Writing $q(z)=\sum_{k=0}^nc_kz^k$, and 
equating coefficients of powers of $z$ gives
$c_n=d_{n-1}$ and $c_k=d_{k-1}-\zeta d_k$ for $1\le k\le n-1$. 
Solving for $d_k$ in terms of $c_k$, we get
\[
d_k=\sum_{j=k+1}^n c_j\zeta^{n-j} \quad(0\le k\le n-1).
\]
By the Cauchy--Schwarz inequality, it follows that
\[
|d_k|^2\le (n-k)\sum_{j=k+1}^n|c_j|^2\le n\|q\|_{H^2}^2
\quad(0\le k\le n-1).
\]
Hence, finally
\[
\cD_\zeta(q)=\sum_{k=0}^{n-1}|d_k|^2\le n^2\|q\|_{H^2}^2. 
\]
This completes the proof of the lemma.\qed
\end{proof}

\textit{Completion of proof of Theorem~\ref{T:main}.}
Let $\mu$ be a finite positive Borel measure on $\overline{\DD}$, and let $f\in\cD_\mu$. Then
\[
\int_{\overline{\DD}}\cD_\zeta(f)\,d\mu(\zeta)<\infty,
\]
and in particular $\cD_\zeta(f)<\infty$ for $\mu$-almost every $\zeta$ in $\overline{\DD}$. We claim that,  for each such $\zeta$, and with $p_n$ as defined as in \eqref{E:pn}, we have
\begin{equation}\label{E:claim}
\lim_{n\to\infty}\cD_\zeta(f-p_n)=0
\quad\text{and}\quad
\sup_{n\ge1}\cD_\zeta(f-p_n)\le C^2\cD_\zeta(f),
\end{equation}
where $C$ is a constant depending only on the array $(w_{n,k})$.
If so, then, by the dominated convergence theorem, we have
\[
\cD_\mu(f-p_n)=\int_{\overline{\DD}}\cD_\zeta(f-p_n)\,d\mu(\zeta)\to0
\quad(n\to\infty).
\]
Since also $f(0)-p_n(0)=a_0(1-w_{n,0})\to0$ as $n\to\infty$, it follows that
\[
\|f-p_n\|_{\cD_\mu}\to0
\quad(n\to\infty),
\]
thereby establishing the theorem.

It remains to verify the claim \eqref{E:claim}.
Fix $\zeta$ with $\cD_\zeta(f)<\infty$, and define 
$g_n$ and $f_n$ as in \eqref{E:gn} and \eqref{E:fn} respectively.
As $\cD_\zeta(\cdot)^{1/2}$ is a seminorm, we have
\[
\cD_\zeta(f-p_n)^{1/2}\le \cD_\zeta(f-f_n)^{1/2}+\cD_\zeta(f_n-p_n)^{1/2}.
\]
By Corollary~\ref{C:localapprox}, we have
\[
\lim_{n\to\infty}\cD_\zeta(f-f_n)=0
\quad\text{and}\quad
\sup_{n\ge1}\cD_\zeta(f-f_n)\le C_1^2\cD_\zeta(f),
\]
where $C_1$ depends only on $(w_{n,k})$.
Therefore it remains to show that
\begin{equation}\label{E:rts}
\lim_{n\to\infty}\cD_\zeta(f_n-p_n)=0
\quad\text{and}\quad
\sup_{n\ge1}\cD_\zeta(f_n-p_n)\le C_2^2\cD_\zeta(f),
\end{equation}
where $C_2$ depends only on $(w_{n,k})$.
For this we use the formula \eqref{E:fn2}, according to which
\[
f_n(z)-p_n(z)=\zeta\sum_{k=0}^{n}(w_{n,k}-w_{n,k+1})b_kz^k+\text{constant}.
\]
Using Lemma~\ref{L:poly} and the fact that $\cD_\zeta(\cdot)$ is zero on constants, we get
\[
\cD_\zeta(f_n-p_n)\le n^2 \sum_{k=0}^n|w_{n,k}-w_{n,k+1}|^2|b_k|^2.
\]
Using condition~\eqref{E:main3}, we have 
\[
\cD_\zeta(f_n-p_n)\le L^2\sum_{k=0}^n|b_k|^2\le L^2\|g\|_{H^2}^2=L^2\cD_\zeta(f),
\]
which yields the second part of \eqref{E:rts} with $C_2=L$.
As for the first part, given $\epsilon>0$, 
we can choose $N$ so large that $\sum_{k=N}^\infty|b_k|^2<\epsilon^2$. 
Then, for all $n\ge N$, we have
\begin{align*}
\cD_\zeta&(f_n-p_n)^{1/2}\\
&\le \cD_\zeta\Bigl(\sum_{k=0}^{N-1}(w_{n,k}-w_{n,k+1})b_kz^k\Bigr)^{1/2}
+\cD_\zeta\Bigl(\sum_{k=N}^{n}(w_{n,k}-w_{n,k+1})b_kz^k\Bigr)^{1/2}\\
&\le \Bigl(N^2 \sum_{k=0}^{N-1}|w_{n,k}-w_{n,k+1}|^2|b_k|^2\Bigr)^{1/2}
+  \Bigl(n^2 \sum_{k=N}^n|w_{n,k}-w_{n,k+1}|^2|b_k|^2\Bigr)^{1/2}\\
&\le \Bigl(N^2 \sum_{k=0}^N|w_{n,k}-w_{n,k+1}|^2|b_k|^2\Bigr)^{1/2}
+ L\epsilon.
\end{align*}
From condition~\eqref{E:main1}, the first term on the right-hand side tends to zero as $n\to\infty$. 
Thus $\limsup_{n\to\infty}\cD_\zeta(f_n-p_n)\le L\epsilon$.
As $\epsilon$ is arbitrary, this establishes the first part of \eqref{E:rts},
and completes the proof of the theorem.\qed


\section{Background on weighted Dirichlet spaces}\label{S:background}

The purpose of this section is to explain the origin of the spaces $\cD_\mu$ and their connection to weighted Dirichlet spaces. As mentioned in the introduction, though this is not required for a technical understanding of the results in this note, it serves as background and motivation for these results.

Given a positive integrable function $\omega$  on $\DD$,
the $\omega$-weighted Dirichlet space consists of those $f\in\hol(\DD)$ such that
\[
\int_\DD|f'(z)|^2\,\omega(z)\,dA(z)<\infty,
\]
where $dA$ denotes normalized area measure on $\DD$.
The classical Dirichlet space corresponds to taking $\omega\equiv1$.

One class of weights that has been much studied over the years are the power weights $\omega(z):=(1-|z|^2)^{1-\alpha}$, where $0\le\alpha\le 1$.
One can show that, if $f(z)=\sum_{k=0}^\infty a_kz^k$, then
\[
\int_\DD |f'(z)|^2 (1-|z|^2)^{1-\alpha}\,dA(z)
\asymp \sum_{k=1}^\infty k^{\alpha}|a_k|^2.
\]  
Thus, as $\alpha$ runs from $0$ to $1$, we obtain a scale of spaces between the Hardy space ($\alpha=0$) and the classical Dirichlet space ($\alpha=1$).

Another important class of weights $\omega$ are the harmonic weights.
These were introduced by Richter
\cite{Ri91} in connection with his analysis of shift-invariant subspaces
of the classical Dirichlet space, and further studied by Richter and Sundberg in \cite{RS91}.

Subsequently  Aleman  \cite{Al93} introduced the class of superharmonic weights, 
which subsumes  the power weights and the harmonic weights.

Let $\omega$ be a positive superharmonic function on $\DD$.
Then there exists a unique positive finite Borel measure $\mu$ 
on $\overline{\DD}$ such that, for all $z\in\DD$,
\[
\omega(z)=\int_\DD \log\Bigl|\frac{1-\overline{\zeta}z}{\zeta-z}\Bigr|\frac{2}{1-|\zeta|^2}\,d\mu(\zeta)
+\int_\TT\frac{1-|z|^2}{|\zeta-z|^2}\,d\mu(\zeta)
\]
(see e.g.\ \cite[Theorem~4.5.1]{Ra95}).
Defining  $\cD_\mu(f)$  as in \S\ref{S:intro},
we have the following result,
which was established by Richter and Sundberg \cite[Proposition~2.2]{RS91} in the case when $\omega$ is harmonic on $\DD$ (this corresponds to  $\mu$ being supported on~$\TT$) and by Aleman \cite[\S IV, Theorem 1.9]{Al93} for general superharmonic weights.

\begin{theorem}\label{T:w=mu}
Let $\omega$ be a superharmonic weight on $\DD$,
and let $\mu$ be the associated measure on $\overline{\DD}$.
Then, for all $f\in\hol(\DD)$, we have
\[
\cD_\mu(f)=\int_\DD |f'(z)|^2\omega(z)\,dA(z).
\]
\end{theorem}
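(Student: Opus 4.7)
The plan is to reduce the claimed integral identity to a pointwise formula for the local seminorm $\cD_\zeta(\cdot)$ at each $\zeta\in\overline{\DD}$, and then swap the order of integration. Specifically, the first step is to establish that for every $f\in\hol(\DD)$ and every $\zeta\in\overline{\DD}$,
\[
\cD_\zeta(f)=\int_\DD |f'(z)|^2\, k_\zeta(z)\, dA(z),
\]
where $k_\zeta(z):=(2/(1-|\zeta|^2))\log|(1-\overline{\zeta}z)/(\zeta-z)|$ when $\zeta\in\DD$ and $k_\zeta(z):=(1-|z|^2)/|\zeta-z|^2$ when $\zeta\in\TT$. Both sides are non-negative and possibly infinite, so this is an equality in $[0,\infty]$. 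Granted it, Tonelli's theorem yields
\[
\cD_\mu(f)=\int_{\overline{\DD}}\cD_\zeta(f)\,d\mu(\zeta)=\int_\DD|f'(z)|^2\Bigl(\int_{\overline{\DD}}k_\zeta(z)\,d\mu(\zeta)\Bigr)\,dA(z)=\int_\DD|f'(z)|^2\omega(z)\,dA(z),
\]
the last equality being precisely the Riesz representation of $\omega$ in terms of $\mu$ stated just before the theorem.

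For the pointwise formula when $\zeta\in\DD$, the case $\zeta=0$ is essentially the classical Littlewood--Paley identity: writing $f(z)=f(0)+zg(z)$, one has $\cD_0(f)=\|g\|_{H^2}^2=\int_\DD|f'(z)|^2\cdot 2\log(1/|z|)\,dA(z)$ by a direct Taylor-series computation, which matches the formula since $k_0(z)=2\log(1/|z|)$. For general $\zeta\in\DD$, the kernel $\log|(1-\overline{\zeta}z)/(\zeta-z)|$ is the Green's function of $\DD$ with pole at $\zeta$, and I would verify the identity on the monomials $f(z)=z^n$ via Green's identity (both sides then becoming explicit elementary functions of $|\zeta|^2$) and extend to $\hol(\DD)$ by polarization. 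For $\zeta\in\TT$, the identity $\cD_\zeta(f)=\int_\DD|f'(z)|^2(1-|z|^2)/|\zeta-z|^2\,dA(z)$ is the classical formula for the local Dirichlet seminorm at a boundary point; a direct proof expands $f(z)=\sum a_kz^k$, evaluates both sides as (possibly divergent) double sums in $(a_k)$ via the Fourier expansion of the Poisson kernel in polar coordinates, and compares term-by-term.

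The main obstacle is the rigorous verification of the pointwise identity when $\zeta\in\TT$ as an equality in the extended sense: one must show not only that the two sides agree when $f\in\cD_\zeta$, but also that finiteness of the right-hand integral forces $(f(z)-f(\zeta))/(z-\zeta)$ to define a genuine element of $H^2$ with the correct squared norm. Once this equivalence is secured, Step 2 via Tonelli is routine and the theorem follows without any further delicate analysis.
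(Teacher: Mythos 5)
First, a point of reference: the paper does not prove Theorem~\ref{T:w=mu}. It appears in the background section \S\ref{S:background} and is explicitly attributed to Richter and Sundberg \cite{RS91} (harmonic weights, i.e.\ $\mu$ supported on $\TT$) and to Aleman \cite{Al93} (general superharmonic weights). So there is no in-paper proof to compare you with; the benchmark is the argument in those references, and your outline is in fact exactly their strategy: establish the pointwise ``local Douglas formulas''
\[
\cD_\zeta(f)=\frac{2}{1-|\zeta|^2}\int_\DD|f'(z)|^2\log\Bigl|\frac{1-\overline{\zeta}z}{\zeta-z}\Bigr|\,dA(z)
\quad(\zeta\in\DD)
\]
and
\[
\cD_\zeta(f)=\int_\DD|f'(z)|^2\,\frac{1-|z|^2}{|\zeta-z|^2}\,dA(z)
\quad(\zeta\in\TT),
\]
both as identities in $[0,\infty]$, and then integrate $d\mu(\zeta)$ against the Riesz decomposition of $\omega$. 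Your outer step is correct and genuinely routine: both kernels are non-negative and jointly measurable, so Tonelli applies with no integrability hypotheses, and the inner integral reproduces $\omega(z)$ by the representation quoted just before the theorem. Your identification of the kernels, and the check at $\zeta=0$ against the Littlewood--Paley identity, are also correct.

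The gap is that the two displayed identities are essentially the entire content of the theorem, and your proposal asserts rather than proves them --- as you yourself concede for $\zeta\in\TT$. The specific routes you sketch do not quite close. For $\zeta\in\TT$, expanding $\int_\DD|f'|^2(1-|z|^2)|\zeta-z|^{-2}\,dA$ as a double sum in the Taylor coefficients produces off-diagonal terms $a_j\overline{a_k}\zeta^{k-j}$ (the angular integration forces only $j+m=k+n$ against the expansion of $|\zeta-z|^{-2}$), and these are not absolutely summable for general $f\in\cD_\zeta$, so ``compare term-by-term'' is not legitimate; the known proofs regularize first (e.g.\ move the pole to $r\zeta$ or replace $f$ by its dilates $f_r$, identify the regularized quantity with $\frac{1}{2\pi}\int_\TT|f(\lambda)-f(r\zeta)|^2|\lambda-r\zeta|^{-2}\,|d\lambda|$, and pass to the limit $r\uparrow1$ with a monotonicity argument). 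Likewise, for $\zeta\in\DD$, verifying the identity on monomials and ``extending by polarization'' gives it only for polynomials; to reach arbitrary $f\in\hol(\DD)$ with both sides possibly infinite you again need a limiting argument, and the off-diagonal structure of the quadratic form means monotone convergence does not apply coefficient-by-coefficient. None of this is unfixable --- these are precisely the lemmas proved in \cite{RS91} and \cite{Al93} --- but as written the proposal reduces the theorem to unproved statements of essentially the same depth as the theorem itself.
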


Thus $\cD_\mu$ is exactly the $\omega$-weighted Dirichlet space,
justifying the assertion made in the introduction.

Further information on weighted Dirichlet spaces can be found in \cite{RS91} and   \cite{Al93}, 
as well as in Chapter~7 of the monograph \cite{EKMR14}.

\section*{Conflict of interest}
The authors declare that they have no conflict of interest.

\bibliographystyle{spmpsci}      
\bibliography{biblist}

\end{document}